 \newtheorem{thm}{Theorem}[section]
 \newtheorem{cor}[thm]{Corollary}
 \newtheorem{lem}[thm]{Lemma}
 \theoremstyle{definition}
 \newtheorem{defn}[thm]{Definition}
 \theoremstyle{remark}
 \newtheorem{rem}[thm]{Remark}
 \numberwithin{equation}{section}
 \newcommand{\Z}{\mathbb{Z}}
\newcommand{\Hom}{\mathrm{Hom}}
\newcommand{\C}{\mathbb{C}}
\newcommand{\Tr}{\mathrm{Tr} }
\newcommand{\undcal}[2]{#1_{\mathcal{#2}}}
\newcommand{\pos}[2]{\mathscr{#1} \! \left(\mathcal{#2}\right)}
\newcommand{\derham}[2]{H^{#1} \! \left(#2\right)}
\newcommand{\Htot}[1]{H^* \! \left(#1\right)}
\newcommand{\cd}{\mathrm{cd}}
\newcommand{\eul}{E \!}
\newcommand{\lef}{L}
\begin{document}

%
%
%
%
%
%
%
%
%

\title[Cohomology of the toric arrangement associated with $A_n$]
 {Cohomology of the toric arrangement \\associated with $A_n$}

\author[O. Bergvall]{Olof Bergvall}

\address{%
Department of Electrical Engineering, Mathematics and Science\\
University of G\"avle\\
801 76 G\"avle\\
Sweden\\
Tel.: +426-64 89 65}

\email{olof.bergvall@hig.se}

\subjclass{Primary 20F55; Secondary 52C35, 54H25}

\keywords{Arrangements, Cohomology, Fixed points, Root systems, Weyl groups}


\begin{abstract}
We compute the total cohomology of the complement of the toric arrangement associated to
    the root system $A_n$ as a representation of the corresponding Weyl group via fixed point theory
    of a ``twisted'' action of the group. We also provide several proofs of
    an explicit formula for the Poincar\'e polynomial
    of the complement of the toric arrangement associated to $A_n$.
\end{abstract}

\maketitle

\section{Introduction}
An arrangement $\mathcal{A}$ is a finite set of closed subvarieties of
a variety $X$.
A finite group $\Gamma$ of automorphisms of $X$, which fixes $\mathcal{A}$
as a set, acts on the complement $\undcal{X}{A}$ of $\mathcal{A}$.
The group $\Gamma$ therefore
also acts on the de Rham cohomology groups $\derham{i}{\undcal{X}{A}}$
which in this way become $\Gamma$-representations.
It is an interesting, but often hard, problem to determine these representations. 
A somewhat easier, but still interesting, problem is to
determine the total cohomology $H^*(\undcal{X}{A})$ as a representation
of $\Gamma$.

In Section~\ref{arrsec} we consider this problem and, generalizing ideas
of Felder and Veselov \cite{felderveselov}, we develop a method for computing $H^*(\undcal{X}{A})$ 
via fixed point theory,
provided that the cohomology groups $\derham{i}{\undcal{X}{A}}$ have
sufficiently nice mixed Hodge structure. It is known that many important
classes of arrangements are of this type and in Section~\ref{rootsec}
we apply our method in the case of a toric arrangement associated to the root
system $A_n$ in order to compute the total cohomology as a representation
of the Weyl group of $A_n$. 

\pagebreak[4]
\begin{thm}
 \label{antotcohthm}
  Let $W_{A_n}$ be the Weyl group of the root system $A_n$.
  Then the total cohomology of the complement $X_{A_n}$
  of the toric arrangement associated to $A_n$ is the $W_{A_n}$-representation
  \begin{equation*}
   H^*(X_{A_n}) = \mathrm{Reg}_{W_{A_n}} + n \cdot \mathrm{Ind}_{\langle s \rangle}^{W_{A_n}}(\mathrm{Triv}_{\langle s \rangle}),
  \end{equation*}
 where $\mathrm{Reg}_{W_{A_n}}$ is the regular representation of $W_{A_n}$
 and $\mathrm{Ind}_{\langle s \rangle}^{W_{A_n}}(\mathrm{Triv_{\langle s \rangle}})$ denotes
 the representation of $W_{A_n}$ induced up from the trivial representation
 of the subgroup generated by the simple reflection $s=(12)$.
 \end{thm}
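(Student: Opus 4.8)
The plan is to compute the equivariant Poincaré polynomial of $X_{A_n}$ by evaluating, for each $w \in W_{A_n}$, the Lefschetz-type trace of a twisted action on the fixed point locus, and then to recognize the resulting class function as the character of the claimed representation. First I would set up the toric arrangement explicitly: the ambient torus is $T = \Hom(Q, \C^*)$ (or the quotient $\C^n/$lattice) where $Q$ is the root lattice of $A_n$, and the arrangement $\mathcal{A}$ consists of the subtori $\{t : \alpha(t) = 1\}$ as $\alpha$ ranges over the roots. The Weyl group $W_{A_n} = S_{n+1}$ acts on $T$ permuting these subtori, and hence on $X_{A_n} = T \setminus \bigcup \mathcal{A}$. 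Since toric arrangement complements are known to have cohomology that is pure (or carries the right mixed Hodge structure — Tate type with $\derham{i}$ pure of weight $2i$, as for hyperplane arrangements, by work of Looijenga, De Concini–Procesi, etc.), the hypothesis of the fixed-point method from Section~\ref{arrsec} is satisfied, and the method expresses $\sum_i (-1)^i \Tr(w \mid \derham{i}{X_{A_n}})$ — or rather a suitable unsigned/weighted version — in terms of the Euler characteristic of the fixed locus of a twisted action of $w$.

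Next I would carry out the fixed-point count. For $w \in S_{n+1}$, the twisted action (twisting by the torus translation / the $\C^*$-scaling that the Felder–Veselov-type argument introduces) has fixed locus on $T$ that is itself a union of translated subtori indexed by the cycle structure of $w$; concretely, a permutation with cycle type $\lambda \vdash n+1$ contributes a fixed locus whose components are tori of dimension $(\text{number of parts of }\lambda) - 1$. I would then intersect with the complement of the arrangement, i.e. compute the Euler characteristic of the fixed locus \emph{inside} $X_{A_n}$, which amounts to counting points of the complement of the induced (smaller) toric arrangement of type $A_{\ell-1}$ on a torus of the appropriate dimension. The key combinatorial input is that the Euler characteristic of the complement of the $A_{m}$ toric arrangement equals the number of a certain kind of labelled structure; one clean way is to use the known Poincaré polynomial specialized at $-1$, or equivalently to count directly. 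This yields an explicit function $\chi(w)$ of the cycle type.

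The final step is representation-theoretic bookkeeping: I would compare $\chi(w)$ against the character of $\mathrm{Reg}_{W_{A_n}} + n\cdot \mathrm{Ind}_{\langle s\rangle}^{W_{A_n}}(\mathrm{Triv})$. The regular representation has character $|W|$ at the identity and $0$ elsewhere; the induced representation $\mathrm{Ind}_{\langle s\rangle}^{W_{A_n}}(\mathrm{Triv})$ has character equal to the number of cosets fixed by $w$, i.e. (up to the standard formula) proportional to the number of transpositions commuting-with / conjugate-compatible with $w$ — concretely its value at $w$ of cycle type $\lambda$ is a simple product/fixed-coset count. I would verify the two class functions agree at every cycle type $\lambda \vdash n+1$; since a class function on $S_{n+1}$ is determined by these values, the identity of virtual representations follows, and positivity (both sides are genuine representations) upgrades it to an honest isomorphism. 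The main obstacle I anticipate is the fixed-point computation for non-identity $w$: correctly identifying the twisted fixed locus as a lower-rank toric arrangement complement and keeping track of the lattice/connected-component subtleties (the fixed locus of a subtorus under a permutation need not be connected, and the arrangement restricted to it must be identified with the $A_{\ell-1}$ arrangement up to a covering), so that the Euler characteristics multiply out correctly across cycles — this is where the bulk of the genuine work lies, whereas the final character match is then a short calculation.
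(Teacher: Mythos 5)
There is a genuine gap, and it lies exactly where you locate "the bulk of the genuine work": the identification of the twisted action and of its fixed loci. The twist in the Felder--Veselov method (and in Section~\ref{arrsec} of the paper) is composition with \emph{complex conjugation}, not a torus translation or $\mathbb{C}^*$-scaling. The point is that each $\derham{i}{X_{A_n}}$ is of Tate type $(i,i)$, so conjugation acts on it by $(-1)^i$, and therefore the \emph{Lefschetz number} of $\bar{g}=(g,\text{conjugation})$ equals the \emph{unsigned} total character $P(X_{A_n})(g)$; this is Lemma~\ref{FVlemma}. Your version — fixed loci of $w$ (possibly translation-twisted) being unions of complex subtori of dimension $(\#\text{parts of }\lambda)-1$, whose intersections with the complement are smaller type-$A$ toric arrangement complements — describes the \emph{untwisted} holomorphic action, and its Euler characteristics compute only the signed Lefschetz numbers $\sum_i(-1)^i\Tr(w,\derham{i}{X_{A_n}})$. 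That class function does not determine $H^*$ as a representation, and it genuinely differs from the total character: for example, a $3$-cycle acting on $X_{A_2}$ has exactly two fixed points in the complement (the three torsion points of the $2$-torus fixed by the Coxeter element, minus the identity character, which lies on the arrangement), so its Lefschetz number is $2$, whereas the total character vanishes on all elements of order $>2$. So the computation you outline would not produce the character you need, and no purity argument can convert the signed sum into the unsigned one after the fact; the conjugation twist is precisely the device that does this.

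Once the correct twisted action is used, the fixed loci are not complex subtori at all but real semi-algebraic sets, and their structure is quite different from your picture: $X_{A_n}^{\bar{g}}$ is \emph{empty} whenever $g$ has a cycle of length $\geq 3$ (the equations force $|z_1|=1$ and hence $\chi(\alpha_{1,3})=1$, landing on the arrangement); for an involution with at least two $2$-cycles the fixed locus fibers over configuration spaces of points in $\mathbb{C}\setminus\{0,1\}$ and has Euler characteristic $0$; for a single transposition one gets Euler characteristic $n!$; and for the identity (the real points) one gets $\frac{(n+2)!}{2}$. Only then does your final bookkeeping step go through: the character $\bigl(\frac{(n+2)!}{2}, n!, 0,\ldots\bigr)$ matches $\mathrm{Reg}_{W_{A_n}}+n\cdot\mathrm{Ind}_{\langle s\rangle}^{W_{A_n}}(\mathrm{Triv}_{\langle s\rangle})$, using that the induced character equals the number of cosets of $\langle s\rangle$ fixed by $w$, namely $\frac{(n+1)!}{2}$ at the identity, $(n-1)!$ at transpositions, and $0$ elsewhere. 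Your closing remarks on matching class functions are fine, but as written the core fixed-point computation targets the wrong invariant.
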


In Section~\ref{poincsec} we continue our study of the complement
of the toric arrangement associated to $A_n$ but we now forget about
the action of the Weyl group and focus on the Poincar\'e polynomial, i.e.
we study the dimensions of the individual cohomology groups.

\begin{thm}
 \label{anpoincpolprop}
 The Poincar\'e polynomial of the complement $X_{A_n}$
  of the toric arrangement associated to $A_n$ is given by
  \begin{equation*}
  P(X_{A_n},t) = \prod_{i=1}^n (1+(i+1) \cdot t).
  \end{equation*}
 \end{thm}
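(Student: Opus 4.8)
\bigskip

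\noindent\textbf{Proof proposal.}
The plan is to identify $X_{A_n}$ with a quotient of an ordered configuration space and to run the Fadell--Neuwirth fibrations. Realising the torus of $A_n$ as $(\C^*)^{n+1}/\C^*$, with $\C^*$ embedded diagonally, the root $e_i-e_j$ becomes the character $t_i/t_j$, so the complement of the toric arrangement is
\[
 X_{A_n} \;=\; \bigl\{\,[t_1:\cdots:t_{n+1}] \;:\; t_i\neq t_j \text{ for } i\neq j\,\bigr\}\;=\;F(\C^*,n+1)/\C^*,
\]
where $F(\C^*,m)$ denotes the space of $m$ ordered distinct points of $\C^*$ and $\C^*$ acts by simultaneous scaling. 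This action is free, and the morphism $F(\C^*,n+1)\to\C^*$ sending $(t_1,\dots,t_{n+1})$ to $t_{n+1}$ is equivariant, so $(t_i)\mapsto\bigl([t_1:\cdots:t_{n+1}],\,t_{n+1}\bigr)$ is an isomorphism $F(\C^*,n+1)\cong X_{A_n}\times\C^*$. By the K\"unneth formula it therefore suffices to prove $P(F(\C^*,n+1),t)=\prod_{k=1}^{n+1}(1+kt)$ and divide by $P(\C^*,t)=1+t$. (Note that Theorem~\ref{antotcohthm} already forces $P(X_{A_n},1)=(n+2)!/2$, but not the individual Betti numbers, so a genuinely separate argument is needed.)

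For the configuration space I would use the projection $\pi\colon F(\C^*,m)\to F(\C^*,m-1)$ forgetting the last point. By Fadell--Neuwirth it is a locally trivial fibre bundle whose fibre over $(t_1,\dots,t_{m-1})$ is $\C^*\setminus\{t_1,\dots,t_{m-1}\}=\C\setminus\{0,t_1,\dots,t_{m-1}\}$, with Poincar\'e polynomial $1+mt$. The essential point is that the Leray--Hirsch hypothesis holds: $H^1$ of the fibre is spanned by the restrictions of the global logarithmic $1$-forms $dt_m/t_m$ and $dt_m/(t_m-t_i)$, $1\le i\le m-1$, defined on all of $F(\C^*,m)$. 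Hence the Leray spectral sequence of $\pi$ degenerates at $E_2$, $H^*(F(\C^*,m))$ is a free $H^*(F(\C^*,m-1))$-module with basis consisting of one class in degree $0$ and $m$ classes in degree $1$, and $P(F(\C^*,m),t)=(1+mt)\,P(F(\C^*,m-1),t)$. Since $P(F(\C^*,1),t)=P(\C^*,t)=1+t$, induction gives $P(F(\C^*,n+1),t)=\prod_{k=1}^{n+1}(1+kt)$ and therefore
\[
 P(X_{A_n},t)=\prod_{k=2}^{n+1}(1+kt)=\prod_{i=1}^{n}\bigl(1+(i+1)t\bigr).
\]

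The step I expect to cost the most effort is the degeneration of the Leray spectral sequence, i.e.\ verifying that the logarithmic classes above restrict to a basis of the cohomology of every fibre, uniformly over the base; this is where one invokes that $F(\C^*,m)$ is a fibre-type (supersolvable) arrangement complement, or, equivalently, one writes down the Orlik--Solomon--type presentation of $H^*(F(\C^*,m))$ directly. I would also record two independent checks. First, a point count over a finite field: the number of ordered $(n+1)$-tuples of distinct elements of $\mathbb{F}_q^{\times}$ is $(q-1)(q-2)\cdots(q-n-1)$, and dividing by the free $\mathbb{F}_q^{\times}$-action yields $\#X_{A_n}(\mathbb{F}_q)=\prod_{k=2}^{n+1}(q-k)$; granting the Tate-type and purity input that is available in this case, this recovers the same product, the price being precisely the justification of that Hodge-theoretic input, which the fibration argument avoids. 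Second, the formulas of Moci and De~Concini--Procesi expressing $P(X_{A_n},t)$ through the poset of layers (the arithmetic Tutte polynomial) of the $A_n$ toric arrangement can be evaluated to the same answer, yielding a purely combinatorial derivation.
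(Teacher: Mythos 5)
Your argument is correct, but it is a genuinely different route from any of the three proofs in the paper. You identify $X_{A_n}$ with $F(\mathbb{C}^*,n+1)/\mathbb{C}^*$, split off the $\mathbb{C}^*$-factor, and then run the Fadell--Neuwirth fibrations together with Leray--Hirsch on the fiber-type (supersolvable) arrangement $\{t_i=0\}\cup\{t_i=t_j\}$; this is essentially Arnol'd's classical fibration argument transplanted to the toric complement, and it is elementary and self-contained: it needs no Hodge-theoretic purity input, no point counting, and no arithmetic-Tutte or M\"obius-function combinatorics, and as a bonus it gives the free-module (indeed ring) structure of $H^*$, not just the Betti numbers. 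The paper instead gives: (1) the isomorphism $X_{A_n}\cong\mathcal{M}_{0,n+3}$ plus a count of $\mathbb{F}_q$-points converted to the Poincar\'e polynomial via Dimca--Lehrer (your first ``independent check'' is this proof in thin disguise, and as you note it does require the purity input your main argument avoids); (2) Moci's formula through the arithmetic Tutte polynomial and the Ardila--Castillo--Henley generating function, which is your second check; (3) MacMeikan's formula combined with the isomorphism of the toric intersection poset with the hyperplane one. What the paper's routes (2) and (3) buy is uniformity over the classical root systems, and route (1) extends equivariantly, whereas your trivialization $F(\mathbb{C}^*,n+1)\cong X_{A_n}\times\mathbb{C}^*$ and the supersolvable structure are special to type $A$. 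One small repair: the forms $dt_m/(t_m-t_i)$ are not closed on the total space (only their restrictions to the fibers are); for Leray--Hirsch you should take the closed global classes $d\log t_m$ and $d\log(t_m-t_i)=(dt_m-dt_i)/(t_m-t_i)$, whose restrictions to the fiber $\{t_1,\dots,t_{m-1}\ \text{fixed}\}$ are exactly the forms you wrote, so the hypothesis holds as claimed.
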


\subsection{Previous work}
Motivated by questions related to the braid group, 
the study of the cohomology of arrangement complements was initiated by Arnol'd in
\cite{arnold} where he computed the cohomology ring of the complement of the hyperplane arrangement
associated to $A_n$. In particular, he found the formula
\begin{equation}
 \label{arnoldformula}
  \prod_{i=1}^n (1+i \cdot t).
 \end{equation}
for the Poincar\'e polynomial. This formula was later generalized to general root systems
by Orlik and Solomon \cite{orliksolomon}. Theorem~\ref{anpoincpolprop} is thus a toric analogue
of Arnol'd's formula. It is certainly well-known to experts (it is in fact implicit already
in the work of Arnol'd) but the explicit statement seems to be missing from the literature.
The purpose of Section~\ref{poincsec} is thus to record the result as well as to discuss
its proof from various points of view.

The study of Arnol'd was continued in a slightly different direction by
Brieskorn \cite{brieskorn} who described
the action of the Weyl group of $A_n$ on the cohomology ring of the complement
of the associated hyperplane arrangement. These results were later improved and extended
by Lehrer \cite{lehrer} and Lehrer and Solomon \cite{lehrersolomon}.
The study of the cohomology of complements of toric arrangements was initiated by Looijenga in
\cite{looijenga} as part of his computation of the cohomology of the moduli space $\mathcal{M}_{3}$
of smooth curves of genus three (it should be mentioned that this paper contains some mistakes which
later have been corrected by Getzler and Looijenga himself in \cite{getzlerlooijenga} and by De Concini and
Procesi in \cite{deconciniprocesi2}). 
Theorem~\ref{antotcohthm} is a toric analogue of the works of Brieskorn \cite{brieskorn},
Lehrer \cite{lehrer} and Lehrer-Solomon \cite{lehrersolomon}.
It should be pointed out that
it can be derived from results of Getzler \cite{getzler} (who was investigating moduli spaces of
rational curves), as well as from results of Gaiffi \cite{gaiffi}, 
Mathieu \cite{mathieu} and Robinson and Whitehouse \cite{robinsonwhitehouse} (who all were more directly interested in arrangements).
Relevant formulas (from the point of view of representation stability) can also be
found in the work of Hersh and Reiner \cite{hershreiner}. 
Our main contribution is thus the new, fixed point theoretic proof.
It also seems to us that our method is the most promising for generalizations, e.g.
 to make computations for other root systems of classical type.
 
\section{Arrangements}
\label{arrsec}
Unless otherwise specified, we shall always work over the complex numbers.

\begin{defn}
\label{arrdefn}
 Let $X$ be a variety. An \emph{arrangement} $\mathcal{A}$ in $X$ is  a finite set 
 $\{A_i\}_{i \in I}$ of closed subvarieties of $X$, where $I$ is a finite index set.
\end{defn}

Given an arrangement $\mathcal{A}$ in a variety $X$ one may define its \emph{cycle}
\begin{equation*}
 \undcal{D}{A} = \bigcup_{i \in I} A_i \subset X,
\end{equation*}
and its \emph{open complement}
\begin{equation*}
 \undcal{X}{A} = X \setminus \undcal{D}{A}.
\end{equation*}
The variety $\undcal{X}{A}$, or rather its de Rham cohomology groups $\derham{i}{\undcal{X}{A}}$,
will be our main object of study. We shall always consider cohomology with rational coefficients.

Let $\Gamma$ be a finite group of automorphisms of $X$ that stabilizes
$\mathcal{A}$ as a set. The action of $\Gamma$ induces actions on
$\undcal{X}{A}$ and on the cohomology groups $\derham{i}{\undcal{X}{A}}$. Each individual cohomology
group $\derham{i}{\undcal{X}{A}}$ thus becomes a $\Gamma$-representation 
and, therefore, so does the total cohomology $\Htot{\undcal{X}{A}}$.
We shall now explain a method to determine $\Htot{\undcal{X}{A}}$ in a large class of interesting cases,
including arrangements of hyperplanes and toric arrangements, generalizing
ideas of Felder and Veselov \cite{felderveselov}.

\subsection{The total cohomology}
Let $\mathcal{A}$
be an arrangement in a variety $X$ and let $\Gamma$ be a finite
group of automorphisms of $X$ that fixes $\mathcal{A}$ as a set.
The group $\Gamma$ will then act on the individual cohomology groups of $\undcal{X}{A}$
and thus on the total cohomology
\begin{equation*}
 \Htot{\undcal{X}{A}} := \bigoplus_{i \geq 0} \derham{i}{\undcal{X}{A}}.
\end{equation*}
The value of the \emph{total character} at $g \in \Gamma$ is defined as
\begin{equation*}
 P(\undcal{X}{A})(g) : = \sum_{i \geq 0} \Tr \left(g, \derham{i}{\undcal{X}{A}} \right),
\end{equation*}
and the \emph{Lefschetz number} of $g \in \Gamma$ is defined as
\begin{equation*}
 \lef (\undcal{X}{A})(g) := \sum_{i \geq 0} (-1)^i \cdot \Tr \left(g, \derham{i}{\undcal{X}{A}} \right).
\end{equation*}
Let $\undcal{X}{A}^g$ denote the fixed point locus of $g \in \Gamma$.
Lefschetz fixed point theorem, see \cite{brown}, then states that
the Euler characteristic $\eul \left(\undcal{X}{A}^g\right)$ of $\undcal{X}{A}^g$
equals the Lefschetz number of $g$, i.e.
\begin{equation*}
 \eul \left( \undcal{X}{A}^g \right) = \lef (\undcal{X}{A})(g).
\end{equation*}

We now specialize to the case when each cohomology group $\derham{i}{\undcal{X}{A}}$ 
has mixed Hodge structure of the form
\begin{equation*}
 \derham{i}{\undcal{X}{A}} = \bigoplus_{j \equiv i \, \mathrm{mod} \, 2} W_{j,j}\derham{i}{\undcal{X}{A}}
\end{equation*}
where $W_{j,j}\derham{i}{\undcal{X}{A}}$ denotes the part of $\derham{i}{\undcal{X}{A}}$ of Tate type $(j,j)$
and where each element of $\mathcal{A}$ is fixed by complex conjugation.
It is known, through work of Brieskorn \cite{brieskorn} and Looijenga \cite{looijenga}, 
that if $\mathcal{A}$ is a hyperplane arrangement or a toric arrangement, then $\derham{i}{\undcal{X}{A}}$ 
has Tate type $(i,i)$ so many interesting examples are of this type (see \cite{dimcalehrer} for a more complete discussion).
We define an action of $\Gamma \times \mathbb{Z}_2$ on $X$
by letting $(g,0) \in \Gamma \times \mathbb{Z}_2$ act as $g \in \Gamma$ and $(0,1) \in\Gamma \times \mathbb{Z}_2$
act by complex conjugation. 
Since $\mathcal{A}$ is fixed under conjugation, this gives an
action on $\undcal{X}{A}$. We write $\bar{g}$ to denote the element
$(g,1) \in \Gamma \times \mathbb{Z}_2$.

Since $\derham{i}{\undcal{X}{A}}$ only has parts with Tate type congruent to $(i,i)$ modulo $2$, complex conjugation
acts as $(-1)^i$ on $\derham{i}{\undcal{X}{A}}$. We thus have
\begin{align*}
 \lef \left(\undcal{X}{A}\right)(\bar{g}) & = \sum_{i \geq 0} (-1)^i \Tr \left(\bar{g}, \derham{i}{\undcal{X}{A}} \right) = \\
 & = \sum_{i \geq 0} (-1)^i \cdot (-1)^i \cdot  \Tr \left(g, \derham{i}{\undcal{X}{A}} \right) = \\
 & = P\left(\undcal{X}{A}\right)(g).
\end{align*}
Since $\lef \left(\undcal{X}{A}\right)(\bar{g}) = \eul \left(\undcal{X}{A}^{\bar{g}}\right)$
we have proved the following lemma.

\begin{lem}
\label{FVlemma}
 Let $X$ be a smooth variety and let $\mathcal{A}$ be
 an arrangement in $X$ which is fixed by complex conjugation and such that
 $\derham{i}{\undcal{X}{A}}$ only has parts of Tate type congruent to $(i,i)$ modulo $2$.
 Let $\Gamma$ be a finite group which acts on $X$ as automorphisms
 and which fixes $\mathcal{A}$ as a set. Then
 \begin{equation*}
 P\left(\undcal{X}{A}\right)(g)= \eul \left(\undcal{X}{A}^{\bar{g}}\right).
 \end{equation*}
\end{lem}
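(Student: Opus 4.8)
The plan is to organize the computations already carried out in this subsection into a three-step argument. First I would record the set-up: let $\mathbb{Z}_2$ act on $X$ by complex conjugation $c$ and, for $g \in \Gamma$, write $\bar g = (g,1) \in \Gamma \times \mathbb{Z}_2$ for the composite of $g$ with $c$. Since $\mathcal{A}$ is fixed by $c$, both $c$ and every $\bar g$ restrict to self-maps of $\undcal{X}{A}$, and since we work with rational coefficients the induced endomorphism of each $\derham{i}{\undcal{X}{A}}$ is $\mathbb{Q}$-linear (this is why we avoid $\C$-coefficients: $c$ is only anti-holomorphic), so that the traces $\Tr(\bar g, \derham{i}{\undcal{X}{A}})$ are defined.

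The heart of the matter is the second step: I would show that $c$ acts on $\derham{i}{\undcal{X}{A}}$ as the scalar $(-1)^i$. By hypothesis $\derham{i}{\undcal{X}{A}}$ is a direct sum of subspaces of Tate type $(j,j)$ with $j \equiv i \bmod 2$, so it suffices to know that $c$ acts by $(-1)^j$ on a Hodge structure of Tate type $(j,j)$; this follows from the fact that $c$ acts by $-1$ on the Tate structure $\mathbb{Q}(-1)$ --- already visible on $\derham{1}{\C^{\times}}$, where $c$ reverses the generating loop --- and hence by $(-1)^j = (-1)^i$ on $\mathbb{Q}(-j)$. This is precisely the observation of Felder and Veselov \cite{felderveselov}; the Tate-type hypothesis of the lemma, which by Brieskorn \cite{brieskorn} and Looijenga \cite{looijenga} holds for hyperplane and toric arrangements, is exactly what makes it applicable here.

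The third step is purely formal. Since $c$ acts as $(-1)^i$ on $\derham{i}{\undcal{X}{A}}$, the endomorphism $\bar g$ equals $(-1)^i$ times the endomorphism $g$ there, whence $\Tr(\bar g, \derham{i}{\undcal{X}{A}}) = (-1)^i \Tr(g, \derham{i}{\undcal{X}{A}})$, with no commutativity between $c$ and $g$ required. Therefore
\[
 \lef(\undcal{X}{A})(\bar g) = \sum_{i \ge 0} (-1)^i \Tr(\bar g, \derham{i}{\undcal{X}{A}}) = \sum_{i \ge 0} \Tr(g, \derham{i}{\undcal{X}{A}}) = P(\undcal{X}{A})(g),
\]
and applying the Lefschetz fixed point theorem in the form $\eul(\undcal{X}{A}^{\bar g}) = \lef(\undcal{X}{A})(\bar g)$ (see \cite{brown}) yields the asserted identity $P(\undcal{X}{A})(g) = \eul(\undcal{X}{A}^{\bar g})$.

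I expect the only genuine content to lie in the second step: one must really know both that the mixed Hodge structure on each $\derham{i}{\undcal{X}{A}}$ has the restricted shape demanded in the hypothesis, and that conjugation on the rational Betti structure is pinned down by the Hodge type as above. A lesser technical point is the applicability of the Lefschetz fixed point formula to the non-algebraic map $\bar g$; this is why one wants $\Gamma$ defined over $\mathbb{R}$, so that $\bar g$ has finite order, acts on a space of finite CW type, and falls within the scope of the cited theorem.
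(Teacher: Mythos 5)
Your proposal follows the same route as the paper: twist the $\Gamma$-action by complex conjugation, use the Tate-type hypothesis to see that conjugation acts as $(-1)^i$ on $\derham{i}{\undcal{X}{A}}$ so that the Lefschetz number of $\bar g$ equals the total character of $g$, and then invoke the Lefschetz fixed point theorem. Your second step merely spells out (via $\mathbb{Q}(-1)$) what the paper asserts directly, so this is the same argument with slightly more detail.
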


\section{Toric arrangements associated to root systems}
\label{torarrsec}
Classically, arrangements of hyperplanes have been given most attention.
However, in the past two decades an increasing number of authors have considered 
also toric arrangements and they have been studied from the point of view of
geometry, topology, algebra and combinatorics.

\begin{defn}
 Let $X$ be an $n$-torus. An arrangement $\mathcal{A}$ in $X$
 is called a \emph{toric arrangement} if each element of $\mathcal{A}$
 is a subtorus.
\end{defn}

\begin{rem}
We shall only be interested in the case where each subtorus in the arrangement has codimension
one, i.e. where the arrangement is divisorial, and write ``toric arrangement'' to
mean ``divisorial toric arrangement''.
\end{rem}

Let $\Phi$ be a root system, let $\Delta= \{\beta_1, \ldots, \beta_n\}$ be a set of simple
roots and let $\Phi^+$ be the set of positive roots of $\Phi$ with respect
to $\Delta$. We think of $\Phi$ as a set of vectors in some 
real Euclidean vector space $V$ and we let $M$ be the $\Z$-linear span of $\Phi$. 
Thus, $M$ is a free $\Z$-module of finite rank $n$.

Define $X=\Hom(M,\C^*) \cong (\C^*)^n$. The Weyl group $W_{\Phi}$ of $\Phi$ acts
on $X$ from the right by precomposition, i.e.
\begin{equation*}
 (\chi.g)(v) = \chi(g.v).
\end{equation*}
For each $\alpha \in \Phi$ we define
\begin{equation*}
 A_{\alpha} = \{\chi \in X | \chi(\alpha) = 1\}.
\end{equation*}
We thus obtain an arrangement of hypertori in $X$
\begin{equation*}
 \mathcal{A}_{\Phi}=\left\{ A_{\alpha} \right\}_{\alpha \in \Phi}.
\end{equation*}
To avoid cluttered notation we
shall write $X_{\Phi}$ instead of the more cumbersome $X_{\mathcal{A}_{\Phi}}$.

Let $\chi \in X$. We introduce the notation $\chi(\beta_i)=z_i$ for the simple roots
$\beta_i$, $i=1, \ldots, n$. The coordinate ring of $X$ is then
\begin{equation*}
 \mathbb{C}\left[X\right] = \mathbb{C}[z_1, \ldots, z_n,z_1^{-1}, \ldots, z_n^{-1}].
\end{equation*}
If $\alpha$ is a root, there are integers $m_1, \ldots, m_n$ such that
\begin{equation*}
 \alpha = m_1 \cdot \beta_1 +  \cdots + m_n \cdot \beta_n.
\end{equation*}
With this notation we have that $\chi(\alpha)=1$ if and only if
\begin{equation*}
 z_1^{m_1} z_2^{m_2} \cdots z_n^{m_n} = 1.
\end{equation*}

\section{Root systems of type $A$}
\label{rootsec}
The root system $A_n$ is most naturally viewed in an $n$-dimensional
subspace of $\mathbb{R}^{n+1}$. Denote the $i$th coordinate vector of $\mathbb{R}^{n+1}$ by $e_i$.
The roots $\Phi$ can then be chosen to be
\begin{equation*}
 \alpha_{i,j} = e_i - e_j, \quad i \neq j.
\end{equation*}
A choice of positive roots is
\begin{equation*}
 \alpha_{i,j} = e_i - e_j, \quad i < j,
\end{equation*}
and the simple roots with respect to this choice of positive roots
are
\begin{equation*}
 \beta_{i} = e_i - e_{i+1}, \quad i=1, \ldots, n.
\end{equation*}
The Weyl group of group $A_n$ is isomorphic to the symmetric group $S_{n+1}$
and an element of $S_{n+1}$ acts on an element in $M=\mathbb{Z}\left\langle \Phi \right\rangle$ by permuting
the indices of the coordinate vectors in $\mathbb{R}^{n+1}$.

\subsection{The total character}
In this section we shall compute the value of the total character at any element
$g \in W_{A_n}$. This will determine the total cohomology $\derham{*}{X_{A_n}}$
as an $W_{A_n}$-representation.
Although we have not pursued this, similar methods should allow the
computation of $\derham{*}{X_{\Phi}}$ also in the case of
root systems of type $B_n$, $C_n$ and $D_n$.

\begin{lem}
\label{gthantwolem}
 Let $W_{A_n}$ be the Weyl group of $A_n$ and suppose that $g \in W_{A_n}$ 
 has a cycle of length greater than two. 
 Then $X_{A_n}^{\bar{g}}$ is empty.
\end{lem}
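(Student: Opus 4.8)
The key object is the fixed-point locus $X_{A_n}^{\bar g}$, where $\bar g = (g,1)$ acts on $X = \Hom(M,\C^*) \cong (\C^*)^n$ by $\chi \mapsto \overline{\chi \circ g}$ (complex conjugation composed with the Weyl group action), restricted to the complement of the arrangement $\mathcal{A}_{A_n}$. Identifying $X$ with $\{(x_1,\dots,x_{n+1}) \in (\C^*)^{n+1} : \prod x_i = 1\}$ via $\chi(\alpha_{i,j}) = x_i/x_j$, the element $g$ (a permutation $\sigma \in S_{n+1}$) acts by permuting coordinates, and $\bar g$ acts by $(x_i) \mapsto (\overline{x_{\sigma^{-1}(i)}})$ — more precisely I will fix conventions so that $\chi$ is fixed by $\bar g$ iff $\overline{x_{\sigma(i)}} = x_i$ for all $i$, up to the overall scaling ambiguity in the $x_i$. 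The plan is: first, write down the fixed-point equations explicitly; second, analyze them cycle by cycle since $\sigma$ acts block-diagonally on the coordinates; third, show that if some cycle has length $\geq 3$, the equations together with the requirement that $\chi$ lie in the arrangement complement (i.e. $x_i \neq x_j$ for all $i \neq j$) are inconsistent.

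**The cycle-by-cycle analysis.** Suppose $\sigma$ has a cycle $(i_1\, i_2\, \cdots\, i_k)$ with $k \geq 3$. On these coordinates the fixed-point condition $\overline{x_{\sigma(i)}} = \lambda x_i$ (for some common scalar $\lambda$ absorbing the torsor ambiguity; in the $A_n$ torus one can in fact normalize so $\lambda$ is a root of unity, but I would first treat the clean case and then handle scaling) reads $\overline{x_{i_2}} = \lambda x_{i_1}$, $\overline{x_{i_3}} = \lambda x_{i_2}$, \dots, $\overline{x_{i_1}} = \lambda x_{i_k}$. Taking absolute values gives $|x_{i_1}| = |x_{i_2}| = \cdots = |x_{i_k}| =: r$, and composing the conjugation-linear maps around the cycle forces a constraint of the form $x_{i_1} = \lambda^{k} \overline{\lambda}^{\,?} \cdots$; chasing it carefully (alternating conjugations) yields that, after solving, the $x_{i_m}$ are determined up to one free phase and the constraint $\lambda \bar\lambda = 1$ or similar, but crucially the \emph{ratios} $x_{i_m}/x_{i_{m'}}$ are forced to take specific values. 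The point I want to extract: for a cycle of length $k$, the conjugation-linear cyclic system forces two of the coordinates in the cycle to be equal (e.g. when $k$ is odd one gets $x_{i_1} = x_{i_3}$ type relations from going around twice, and when $k \geq 3$ the rigidity always produces a coincidence $x_{i_a} = x_{i_b}$ with $a \neq b$). Such a coincidence means $\chi(\alpha_{i_a,i_b}) = 1$, i.e. $\chi \in A_{\alpha_{i_a,i_b}}$, so $\chi \notin X_{A_n}$.

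**The main obstacle.** The delicate point is bookkeeping the overall scaling freedom: because $M = \Z\langle\Phi\rangle$ sits inside $\Z^{n+1}$ as the sum-zero sublattice, a point of $X$ is a tuple $(x_1,\dots,x_{n+1})$ only up to simultaneous rescaling $x_i \mapsto c x_i$, and the conjugation action interacts with this $\C^*$-torsor structure, so the equation $\overline{x_{\sigma(i)}} = x_i$ should really be $\overline{x_{\sigma(i)}} = \mu \cdot x_i$ for a single $\mu \in \C^*$ that I must carry through. I expect that imposing $\prod x_i = 1$ (or any fixed normalization) pins down $|\mu|$ and reduces $\mu$ to a phase, and then the per-cycle analysis goes through with $\lambda$ replaced by this phase; the coincidence-forcing argument is insensitive to the phase because it only compares coordinates \emph{within} a single cycle. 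So the real work is (i) setting up the normalization cleanly and (ii) the linear algebra of conjugation-semilinear cyclic maps showing a forced coincidence once $k \geq 3$ — for $k=2$ the system is $\overline{x_{i_2}} = \mu x_{i_1}$, $\overline{x_{i_1}} = \mu x_{i_2}$, which is consistent with $x_{i_1} \neq x_{i_2}$, explaining why the hypothesis $k \geq 3$ is exactly what is needed. Once the coincidence $x_{i_a} = x_{i_b}$ is produced, the conclusion $X_{A_n}^{\bar g} = \emptyset$ is immediate from the definition of the arrangement complement.
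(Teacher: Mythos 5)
Your proposal is correct and takes essentially the same approach as the paper: you analyze the conjugation-semilinear fixed-point equations along a single cycle of length $\geq 3$, force the relevant scalar to be a phase, and deduce a coincidence of two coordinates in the cycle (equivalently $\chi(\alpha_{i_1,i_3})=1$), which is exactly the paper's conclusion that $\chi \in A_{\alpha_{1,3}}$. The only difference is cosmetic: the paper works in the coordinates $z_i=\chi(\beta_i)$ on $\Hom(M,\C^*)$, so the overall scaling scalar $\mu$ you flag as the main obstacle never arises --- and in your model it is in any case forced to satisfy $|\mu|=1$ by the cycle equations alone, with no normalization needed.
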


\begin{proof}
 The statement only depends on the conjugacy class of $g$ so suppose
 that $g$ contains the cycle $(1,2,\ldots,s)$, where $s \geq 3$. 
 We then have
 \begin{align*}
  & g.\beta_1=e_2-e_3 =\beta_2,\\
  & g.\beta_2=e_3-e_4 =\beta_3,\\
  & \vdots \\
  & g.\beta_{s-2}=e_{s-1}-e_s = \beta_{s-1},\\
  & g.\beta_{s-1} = e_s-e_1 = -(\beta_1+\ldots+\beta_{s-1}).
 \end{align*}
 If $\overline{g}.\chi=\chi$ we must have
 \begin{align*}
  & z_i = \overline{z}_{i+1} \quad \text{for} \quad i= 1, \ldots, s-2, \tag{1}\\
  & z_{s-1}=\overline{z}_1^{-1}\overline{z}_2^{-1} \cdots \overline{z}_{s-1}^{-1}. \tag{2}
 \end{align*}
 We insert (1) into (2) and take absolute values to obtain
 $|z_1|^s=1$. We thus see that $|z_1|=1$. Since we have $z_2 = \overline{z}_1$ it follows that
 \begin{equation*}
 \chi(\alpha_{1,3})=\chi(\beta_1+\beta_2)=z_1 \cdot z_2 =z_1 \cdot \overline{z}_1 = |z_1|^2=1.
 \end{equation*}
 Thus, $\chi$ lies in $X_{\alpha_{13}}$ so $X_{\Phi}^{\overline{g}}$ is empty.
\end{proof}

%

 If we apply Lemma~\ref{FVlemma} to Lemma~\ref{gthantwolem} we obtain the following corollary.

\begin{cor}
 If $g$ is an element of the Weyl group of $A_n$ such that $g^2 \neq \mathrm{id}$, then
 \begin{equation*}
  P(X_{A_n})(g)=0.
 \end{equation*}
\end{cor}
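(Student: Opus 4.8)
The plan is to combine Lemma~\ref{gthantwolem} with Lemma~\ref{FVlemma} in the obvious way: an element $g \in W_{A_n}$ satisfies $g^2 \neq \mathrm{id}$ precisely when its cycle decomposition contains a cycle of length at least three. Indeed, a permutation all of whose cycles have length $1$ or $2$ is an involution (or the identity), and conversely any permutation with a cycle of length $s \geq 3$ sends that cycle to itself nontrivially under squaring, so $g^2 \neq \mathrm{id}$. Thus the hypothesis of the corollary is equivalent to the hypothesis of Lemma~\ref{gthantwolem}.

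Given such a $g$, Lemma~\ref{gthantwolem} tells us that the fixed point locus $X_{A_n}^{\bar g}$ is empty. The Euler characteristic of the empty variety is $0$, so $\eul\left(X_{A_n}^{\bar g}\right) = 0$. Now I would invoke Lemma~\ref{FVlemma}: the toric arrangement $\mathcal{A}_{A_n}$ in the torus $X = \Hom(M,\C^*)$ is defined by equations $\chi(\alpha) = 1$ with $\alpha$ having integer coordinates, so each $A_\alpha$ is fixed by complex conjugation; the torus is smooth; $W_{A_n}$ acts by automorphisms fixing the arrangement as a set; and by the cited results of Brieskorn and Looijenga each $\derham{i}{X_{A_n}}$ is of Tate type $(i,i)$, hence certainly has only parts of Tate type congruent to $(i,i)$ modulo $2$. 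Therefore $P(X_{A_n})(g) = \eul\left(X_{A_n}^{\bar g}\right) = 0$, which is the claim.

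There is essentially no obstacle here — the corollary is a direct formal consequence of the two lemmas. The only point meriting a sentence is the elementary group-theoretic translation between ``$g$ has a cycle of length greater than two'' and ``$g^2 \neq \mathrm{id}$'', and the observation that the hypotheses of Lemma~\ref{FVlemma} are met by toric arrangements (which was already discussed in Section~\ref{arrsec}). So I would simply write a two-line proof spelling out this chain of implications.

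\begin{proof}
 If $g^2 \neq \mathrm{id}$ then $g$ is not an involution, so in its cycle decomposition some cycle has length $s \geq 3$. By Lemma~\ref{gthantwolem} the fixed point locus $X_{A_n}^{\bar{g}}$ is empty, so $\eul\left(X_{A_n}^{\bar{g}}\right) = 0$. Since the toric arrangement $\mathcal{A}_{A_n}$ satisfies the hypotheses of Lemma~\ref{FVlemma}, we conclude that
 \begin{equation*}
  P(X_{A_n})(g) = \eul\left(X_{A_n}^{\bar{g}}\right) = 0. \qedhere
 \end{equation*}
\end{proof}
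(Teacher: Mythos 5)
Your proof is correct and follows exactly the paper's route: the paper derives the corollary by applying Lemma~\ref{FVlemma} to Lemma~\ref{gthantwolem}, which is precisely your chain of implications. Spelling out that $g^2 \neq \mathrm{id}$ is equivalent to having a cycle of length at least three, and that the empty fixed locus has Euler characteristic zero, is a fine (and harmless) elaboration of what the paper leaves implicit.
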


We thus know the total character of all elements in the Weyl group of $A_n$ of order greater
than $2$. We shall therefore turn our attention to the involutions.

\begin{lem}
 \label{manytranslem}
 If $g$ is an element of the Weyl group of $A_n$ of order $2$ which is not a reflection, then 
 \begin{equation*}
  E(X_{A_n}^{\bar{g}})= P(X_{A_n})(g)=0.
 \end{equation*}
\end{lem}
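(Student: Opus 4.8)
The plan is to apply Lemma~\ref{FVlemma}, which reduces the statement to showing $E\left(X_{A_n}^{\bar{g}}\right)=0$. Since $g$ has order two it is a product of disjoint transpositions, and since it is not a reflection there are at least two of them; write $g=(a_1b_1)\cdots(a_kb_k)$ with $k\geq 2$ and let $c_1,\dots,c_f$ be the indices fixed by $g$, so that $2k+f=n+1$. I would work with the model $X_{A_n}=(\C^*)^{n+1}/\C^*$, obtained by dualizing $0\to M\to\Z^{n+1}\to\Z\to 0$ (sum of coordinates), in which $W_{A_n}=S_{n+1}$ permutes the coordinates $x_1,\dots,x_{n+1}$ and $A_{\alpha_{i,j}}$ corresponds to $\{x_i=x_j\}$. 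A point $[x]$ is fixed by $\bar{g}$ exactly when $\overline{x_{g(i)}}=\lambda x_i$ for all $i$ and some $\lambda\in\C^*$; a short computation gives $|\lambda|=1$, and the scaling freedom lets one normalize $\lambda=1$, after which only a residual free diagonal $\mathbb{R}^*$-action remains. In this normalization the fixed locus consists of the tuples $(w_1,\dots,w_k,t_1,\dots,t_f)$ with $w_j=x_{a_j}$, $x_{b_j}=\overline{w_j}$, $t_c=x_{c}$, where distinctness of the $x_i$ amounts to: each $w_j\in\C\setminus\mathbb{R}$, the $2k$ numbers $w_1,\dots,w_k,\overline{w_1},\dots,\overline{w_k}$ are pairwise distinct, and the $t_c$ are pairwise distinct (there is automatically no coincidence between a $w_j$ and a $t_c$). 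Thus $X_{A_n}^{\bar{g}}$ is the quotient of this space $\widetilde{N}$ by the free $\mathbb{R}^*$-action, and since such a quotient alters the Euler characteristic only by a nonzero factor it suffices to show $E(\widetilde{N})=0$.

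The second step is to separate variables. There being no incidence relation between the $w_j$'s and the $t_c$'s, $\widetilde{N}=\widetilde{N}_0\times\mathrm{Conf}_f(\mathbb{R}^*)$, where $\widetilde{N}_0=\{(w_1,\dots,w_k)\in(\C\setminus\mathbb{R})^k:\ w_j\neq w_{j'},\ w_j\neq\overline{w_{j'}}\ \text{for }j\neq j'\}$; by multiplicativity of the Euler characteristic it is enough to prove $E(\widetilde{N}_0)=0$. Now split $\widetilde{N}_0$ according to which of the two clopen half-planes $\mathcal{H}^{+},\mathcal{H}^{-}$ contains each $w_j$. This is a decomposition into $2^k$ clopen pieces, and on the piece labelled by $\varepsilon\in\{+,-\}^k$ the map sending $w_j$ to $w_j$ if $\varepsilon_j=+$ and to $\overline{w_j}$ if $\varepsilon_j=-$ is a homeomorphism onto $\mathrm{Conf}_k(\mathcal{H}^{+})\cong\mathrm{Conf}_k(\C)$; the point is that under this relabelling the two distinctness conditions on the $w_j$ become precisely ``the $k$ resulting points of $\mathcal{H}^{+}$ are distinct''. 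Hence $E(\widetilde{N}_0)=2^k\cdot E\left(\mathrm{Conf}_k(\C)\right)$.

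Finally, $\mathrm{Conf}_k(\C)$ is a factor $\C$ times the complement of the hyperplane arrangement associated with $A_{k-1}$, so by Arnol'd's formula~\eqref{arnoldformula} $E\left(\mathrm{Conf}_k(\C)\right)=\prod_{i=1}^{k-1}(1-i)$, which vanishes for $k\geq 2$ thanks to the factor with $i=1$; alternatively, for $k\geq 2$ the scaling action of $\C^*$ on $\mathrm{Conf}_k(\C)$ is free, which already forces the Euler characteristic to vanish. Therefore $E(\widetilde{N}_0)=0$, hence $E(\widetilde{N})=0$, hence $E\left(X_{A_n}^{\bar{g}}\right)=0$, and Lemma~\ref{FVlemma} yields $P(X_{A_n})(g)=0$.

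I expect the first step to demand the most care: choosing the $(\C^*)^{n+1}/\C^*$ model and checking that, once the scaling ambiguity has been spent to set $\lambda=1$, the defining equations of $X_{A_n}^{\bar{g}}$ are exactly the conjugation relations $x_{b_j}=\overline{x_{a_j}}$ and $x_{c}\in\mathbb{R}^*$ together with the open conditions $x_i\neq x_j$ coming from removing the arrangement. One should also keep in mind that ``Euler characteristic'' must be read in the compactly supported sense throughout (the one that appears in the Lefschetz theorem for non-proper varieties), so that it is genuinely additive over the clopen decomposition and multiplicative over the product $\widetilde{N}_0\times\mathrm{Conf}_f(\mathbb{R}^*)$, these spaces being non-compact real semialgebraic sets.
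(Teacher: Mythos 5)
Your proof is correct, but it follows a genuinely different route from the one in the paper. The paper picks the representative $g=(1,2)(3,4)\cdots(2k-1,2k)$, passes to a $g$-adapted basis $\gamma_1,\dots,\gamma_n$ of the root lattice, writes the equations of $X_{A_n}^{\bar g}$ in the coordinates $t_i=\chi(\gamma_i)$, and identifies the pieces of the fixed locus with configuration spaces of points in the twice punctured plane (times cells), concluding from the vanishing of the Euler characteristic of a configuration space of points in $\C$. You work instead in the homogeneous model $(\C^*)^{n+1}/\C^*$ (the model the paper itself uses in its first proof of Theorem~\ref{anpoincpolprop}); your normalization of the scalar $\lambda$ (first $|\lambda|=1$, then $\lambda=1$, leaving a residual free $\mathbb{R}^*$) is correct, and so are the resulting equations $x_{b_j}=\overline{x_{a_j}}$, $x_c\in\mathbb{R}^*$, the observation that coincidences between a $w_j$ and a $t_c$ are automatic, the splitting $\widetilde{N}=\widetilde{N}_0\times\mathrm{Conf}_f(\mathbb{R}^*)$, and the $2^k$-fold clopen decomposition of $\widetilde{N}_0$ into copies of $\mathrm{Conf}_k(\C)$, whose Euler characteristic vanishes exactly when $k\geq 2$. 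What your route buys: it treats all involutions with $k\geq 2$ uniformly (including fixed-point-free ones, for which the paper's change of basis must be modified), and it replaces the most delicate step of the paper's argument --- the identification of each component with $\mathcal{C}_{k-1}\!\left(\C\setminus\{0,1\}\right)\times(0,1)\times\mathbb{R}^{n-2k+1}$ together with the asserted homotopy equivalence with $\mathcal{C}_{k+1}(\C)$, which read literally is off by an affine-group factor and would give a nonzero Euler characteristic per component --- by a completely explicit decomposition that one can check coordinate by coordinate. What the paper's route buys is that it stays in the root-lattice coordinates used throughout Section~\ref{rootsec} and never needs to quotient by a residual group.

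One bookkeeping remark: Lemma~\ref{FVlemma} and the remark following Lemma~\ref{manytranslem} are phrased with the ordinary (homotopy-invariant) Euler characteristic, whereas you insist on the compactly supported one. The discrepancy is harmless here: your clopen and product steps are valid for either version, and for the quotient step you may either keep $E_c$ (multiplicative in the locally trivial $\mathbb{R}^*$-bundle $\widetilde{N}\to X_{A_n}^{\bar g}$) and then use that the fixed locus is a smooth $n$-manifold of finite type, so the two Euler characteristics agree up to sign, or quotient first by $\mathbb{R}_{>0}$ (a homotopy equivalence) and then by the remaining free $\mathbb{Z}/2$. Either way $E\!\left(X_{A_n}^{\bar g}\right)=0$ in the sense required by Lemma~\ref{FVlemma}, and the lemma follows.
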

 
 \begin{proof}
 Let $k>1$ and consider the element $g=(1,2)(3,4) \cdots (2k-1,2k)$. We define a new basis
 for $M$:
 \begin{align*}
  & \gamma_i = \beta_i + \beta_{i+1}, &   i=1, \ldots, 2k-2,  & \,\\
  & \gamma_j = \beta_j & j=2k-1, \ldots,n & \,
 \end{align*}
 Then
 \begin{equation*}
  g.\gamma_{2i-1}=\gamma_{2i}, \quad \text{and} \quad g.\gamma_{2i}=\gamma_{2i-1}, 
 \end{equation*}
 for $i=1, \ldots k-1$, 
 \begin{equation*}
  g.\gamma_{2k-1}=-\gamma_{2k-1}, \quad g.\gamma_{2k}=\gamma_{2k-1}+\gamma_{2k},
 \end{equation*}
 and $g.\gamma_i=\gamma_i$ for $i>2k$. If we put $\chi(\gamma_i)=t_i$, then $X_{A_n}^{\bar{g}} \subseteq X_{A_n}$ is given by the equations
 \begin{equation*}
 \begin{array}{lcll}
  t_{2i-1} & = & \bar{t}_{2i} &  i=1, \ldots, k-1, \\
  t_{2k-1} & = & \bar{t}_{2k-1}^{-1}, & \, \\
  t_{2k}   & = & \bar{t}_{2k-1} \cdot \bar{t}_{2k}, & \, \\
  t_{i}    & = & \bar{t}_i, & i = 2k+1, \ldots, n.
  \end{array}
 \end{equation*}
 Thus, the points of $X_{A_n}^{\bar{g}}$ have the form 
 \begin{equation*}
 (t_1, \bar{t}_1, t_2, \bar{t}_2, \ldots, t_{k-1}, \bar{t}_{k-1},s,s^{-1/2} \cdot r, t_{2k}, \ldots, t_{n}),
 \end{equation*}
 where $s \in S^1 \setminus \{1\} \subset \mathbb{C}$, $r \in \mathbb{R}^+$ and $t_i \in \mathbb{R}$ for $i=2k, \ldots, n$.
 
 We can now see that each connected component of $X_{A_n}^{\bar{g}}$ is homeomorphic
 to $\mathcal{C}_{k-1}(\mathbb{C}\setminus \{0,1 \}) \times (0,1)  \times \mathbb{R}^{n-2k+1}$,
 where $\mathcal{C}_{k-1}(\mathbb{C}\setminus \{0,1 \})$ is the configuration space
 of $k-1$ points in the twice punctured complex plane. This space is in turn homotopic
 to the configuration space $\mathcal{C}_{k+1}(\mathbb{C})$ of $k+1$ points in the complex
 plane. The space $\mathcal{C}_{k+1}(\mathbb{C})$ is known to have Euler characteristic zero
 for $k \geq 1$.
 \end{proof}

 \begin{rem}
  Note that it is essential that we use ordinary cohomology in the above
  proof, since compactly supported cohomology is not homotopy invariant.
  However, since $X_{\Phi}$ satisfies Poincar\'e duality, the corresponding
  result follows also in the compactly supported case.
 \end{rem}
 
 We now turn to the reflections.
 
 \begin{lem}
 \label{reflectionlem}
  If $g$ is a reflection in the Weyl group of $A_n$, then 
  \begin{equation*}
   E(X_{A_n}^{\bar{g}})=P(X_{A_n})(g)=n!.
  \end{equation*}
 \end{lem}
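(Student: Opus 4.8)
The plan is to reduce to one specific reflection, write down the fixed locus of $\bar s$ explicitly, and then compute its Euler characteristic component by component. Since the total character $P(X_{A_n})$ is a class function and all reflections of $W_{A_n}\cong S_{n+1}$ (that is, all transpositions) are conjugate, $P(X_{A_n})(g)=P(X_{A_n})(s)$ for $s=(1,2)$, and by Lemma~\ref{FVlemma} this common value also equals $E(X_{A_n}^{\bar g})=E(X_{A_n}^{\bar s})$; so it suffices to prove $E(X_{A_n}^{\bar s})=n!$.

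First I would record the action of $s$ on the simple roots: $s.\beta_1=-\beta_1$, $s.\beta_2=\beta_1+\beta_2$ (when $n\geq 2$), and $s.\beta_i=\beta_i$ for $i\geq 3$. Writing $z_i=\chi(\beta_i)$ as in Section~\ref{torarrsec}, the equation $\bar s.\chi=\chi$ becomes $|z_1|=1$, $z_1z_2=\bar z_2$ and $z_i\in\mathbb{R}\setminus\{0\}$ for $i\geq 3$; thus $z_1=\bar z_2/z_2$ is determined by $z_2$, and $\chi(\beta_1)=z_1\neq 1$ is equivalent to $z_2\notin\mathbb{R}$. To deal with the remaining arrangement conditions I would pass to the coordinates $w_k=z_1\cdots z_k$, in which membership in $X_{A_n}$ simply says that $w_0=1,w_1,\dots,w_n$ are pairwise distinct (since $\chi(\alpha_{i,j})=w_{j-1}/w_{i-1}$). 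On the fixed locus $w_1=\bar z_2/z_2$, $w_2=\bar z_2$, and $w_k=\bar z_2\cdot z_3\cdots z_k$ for $k\geq 2$. Using $z_2\notin\mathbb{R}$ one checks directly that $w_0$ and $w_1$ are distinct, and that each of $w_0,w_1$ differs from every $w_k$ with $k\geq 2$ (any such coincidence would force $z_2$ to be real); so the only distinctness conditions among $w_0,\dots,w_n$ that remain are $z_3\cdots z_j\neq z_3\cdots z_k$ for $2\leq j<k\leq n$ (with the empty product equal to $1$). Putting $u_k=z_3\cdots z_k$, which defines a diffeomorphism of $(\mathbb{R}\setminus\{0\})^{n-2}$ onto itself, these say precisely that $u_3,\dots,u_n$ are distinct points of $\mathbb{R}\setminus\{0,1\}$. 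Hence, for $n\geq 2$,
\begin{equation*}
 X_{A_n}^{\bar s}\;\cong\;(\mathbb{C}\setminus\mathbb{R})\times\mathcal{C}_{n-2}(\mathbb{R}\setminus\{0,1\}),
\end{equation*}
where $\mathcal{C}_m(\mathbb{R}\setminus\{0,1\})$ is the configuration space of $m$ ordered points in $\mathbb{R}\setminus\{0,1\}$ (a single point when $m=0$), while for $n=1$ one gets instead $X_{A_1}^{\bar s}=S^1\setminus\{1\}$.

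It then remains to count Euler characteristics. Both $S^1\setminus\{1\}$ and each of the two half-planes making up $\mathbb{C}\setminus\mathbb{R}$ are contractible, so $E(X_{A_1}^{\bar s})=1=1!$ and $E(\mathbb{C}\setminus\mathbb{R})=2$. Likewise $\mathcal{C}_m(\mathbb{R}\setminus\{0,1\})$ is a disjoint union of contractible components, one for each way of distributing the $m$ labelled points among the three open intervals of $\mathbb{R}\setminus\{0,1\}$ and linearly ordering the points within each interval; a distribution putting $m_1,m_2,m_3$ points in the three intervals accounts for $\binom{m}{m_1,m_2,m_3}m_1!m_2!m_3!=m!$ components, and summing over the $\binom{m+2}{2}$ distributions gives $m!\binom{m+2}{2}=(m+2)!/2$ components, so $E(\mathcal{C}_m(\mathbb{R}\setminus\{0,1\}))=(m+2)!/2$. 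Multiplicativity of $E$ under products, with $m=n-2$, then yields $E(X_{A_n}^{\bar s})=2\cdot n!/2=n!$ for $n\geq 2$, as required.

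I expect the delicate step to be the explicit identification of the fixed locus: once $z_1$ has been eliminated in terms of $z_2$, one must check that no remaining condition $\chi(\alpha_{i,j})\neq 1$ — in particular none involving the index $1$ or ``wrapping around'' the cycle of $s$ — imposes a further equation, so that $X_{A_n}^{\bar s}$ really is the product written above; the point of passing to the coordinates $w_k$ is precisely to make this verification transparent. The degenerate case $n=1$, where there is no coordinate $z_2$, is handled directly.
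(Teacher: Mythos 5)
Your proposal is correct and follows essentially the same route as the paper: reduce to $s=(1,2)$, write out the fixed-point equations from the action on simple roots, and compute $E(X_{A_n}^{\bar s})$ by exhibiting the fixed locus as a product of a two-component piece (your $\mathbb{C}\setminus\mathbb{R}$ is the paper's $(S^1\setminus\{1\})\times\mathbb{R}^*$) with a disjoint union of $n!/2$ contractible pieces. The only differences are organizational — your $w_k$, $u_k$ coordinates and the multinomial count of components of $\mathcal{C}_{n-2}(\mathbb{R}\setminus\{0,1\})$ replace the paper's sequential ``$i$ choices at step $i$'' count — and your explicit verification that no residual condition involves $z_1,z_2$ is a welcome elaboration of a step the paper treats tersely.
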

 
 \begin{proof}
  Let $g=(1,2)$. We then have
 \begin{align*}
  g.\beta_1 & = -\beta_1, \\
  g.\beta_2 & =  \beta_1+\beta_2,\\
  g.\beta_{i} & =  \beta_i,  & i>2.
 \end{align*}

 This gives the equations
 \begin{align*}
   z_1 & = \overline{z}_1^{-1}, \\
   z_2 & = \overline{z}_1 \cdot \overline{z}_{2}, \\
   z_i & = \overline{z}_i, & i>2.
 \end{align*}
 Thus $z_1 \in S^1 \setminus \{1\}$, $z_2$ is not real and satisfies $z_2 =\overline{z}_1 \cdot \overline{z}_{2}$
 so we choose $z_2$ from a space isomorphic to $\mathbb{R}^*$.
 Hence, $X_{A_n}^{\bar{g}} \cong [0,1] \times \mathbb{R}^* \times Y$
 where $Y$ is the space where the last $n-2$ coordinates  $z_3, \ldots,z_n$ takes their values.
 
 These coordinates satisfy $z_i = \overline{z}_i$, i.e. they are real.
 We begin by choosing $z_3$. Since $\chi(e_i - e_j) \neq 0, 1$ we
 need $z_3 \neq 0, 1$. We thus choose $z_3$ from $\mathbb{R}\setminus \{0,1\}$.
 We then choose $z_4$ in $\mathbb{R}\setminus \{0,1,\frac{1}{z_3}\}$,
 $z_5$ in $\mathbb{R}\setminus \{0,1,\frac{1}{z_4}, \frac{1}{z_3 \cdot z_4}\}$ and so on. 
 In the $i$th step we have $i$ components to choose from. Thus, $Y$ consists of
 \begin{equation*}
 3 \cdot 4 \cdots n = \frac{n!}{2}
 \end{equation*}
 components, each isomorphic to $\mathbb{R}^{n-2}$. Hence, $E(Y)=\frac{n!}{2}$
 and it follows that
 \begin{equation*}
  E(T_{\Phi}^{\bar{g}})= E([0,1]) \cdot E(\mathbb{R}^*) \cdot E(Y)= n!.
 \end{equation*}
 \end{proof}

 It remains to compute the value of the total character at the identity element.
 
 \begin{lem}
 \label{idlem}
  $E(X_{A_n}^{\bar{\mathrm{id}}})=P(X_{A_n})(\mathrm{id})=\frac{(n+2)!}{2}$
 \end{lem}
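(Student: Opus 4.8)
The plan is to compute the Euler characteristic of $X_{A_n}^{\bar{\mathrm{id}}}$ directly, just as in the proof of Lemma~\ref{reflectionlem}, and then invoke Lemma~\ref{FVlemma} to conclude that this number equals $P(X_{A_n})(\mathrm{id})$. Since $\bar{\mathrm{id}}$ acts on $X_{A_n}$ purely by complex conjugation, the fixed locus $X_{A_n}^{\bar{\mathrm{id}}}$ consists of those characters $\chi$ with $\chi(\beta_i) = \overline{\chi(\beta_i)}$ for every simple root, i.e.\ the real points: writing $z_i = \chi(\beta_i)$, we need each $z_i \in \mathbb{R}^*$, subject to the arrangement conditions $\chi(\alpha) \neq 1$ for every positive root $\alpha = e_i - e_j$ with $i<j$, which translate into $z_i z_{i+1} \cdots z_{j-1} \neq 1$.

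The main step is therefore to count the connected components of this real locus, since each component is a contractible open subset of $\mathbb{R}^n$ and hence contributes $1$ to the Euler characteristic. I would argue by the same inductive sweep used for the reflection case: choose $z_1 \in \mathbb{R}\setminus\{0,1\}$ ($2$ sign-and-interval choices once we also record whether $z_1$ is positive/negative relative to $1$, but more carefully: the removed locus from $z_1$ alone is $\{0,1\}$, giving $3$ components), then $z_2$ avoiding $0$, $1$ and $1/z_1$ (that is $4$ forbidden values in the worst configuration, giving $5$ components as a function of the already-fixed $z_1$ in a generic component), and so on; in the $i$th step the coordinate $z_i$ must avoid $0$ together with the $i$ products $1$, $1/z_{i-1}$, $1/(z_{i-1}z_{i-2})$, \ldots, $1/(z_{i-1}\cdots z_1)$, hence lies in a real line with $i+1$ points deleted, contributing a factor $i+2$. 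Multiplying, $X_{A_n}^{\bar{\mathrm{id}}}$ has
\begin{equation*}
3 \cdot 4 \cdots (n+2) = \frac{(n+2)!}{2}
\end{equation*}
components, each homeomorphic to $\mathbb{R}^n$, so $E(X_{A_n}^{\bar{\mathrm{id}}}) = \frac{(n+2)!}{2}$, and Lemma~\ref{FVlemma} finishes the proof.

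The hard part is making the component count rigorous rather than heuristic: one must verify that at each stage the $i+1$ ``forbidden'' values $1, 1/z_{i-1}, \ldots, 1/(z_{i-1}\cdots z_1)$ are \emph{distinct} on every component surviving from the previous steps, so that exactly $i+2$ intervals always appear and the count is genuinely a product. This is where the earlier deletions pay off: on a component of the locus defined by the first $i-1$ coordinates, all partial products $z_k z_{k+1}\cdots z_{\ell}$ are already constrained away from $1$, which forces the relevant reciprocals to be pairwise different and different from $1$; I would spell this out as a short induction on $i$, observing that two forbidden values $1/(z_{i-1}\cdots z_a)$ and $1/(z_{i-1}\cdots z_b)$ with $a<b$ coincide exactly when $z_a z_{a+1}\cdots z_{b-1} = 1$, which is precisely one of the arrangement conditions already excluded. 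Once this distinctness is in hand, the cardinalities multiply cleanly and the formula follows.
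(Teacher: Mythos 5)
Your proof is correct and follows essentially the same route as the paper's own: the fixed locus of $\bar{\mathrm{id}}$ is the real locus $z_i=\bar z_i$, and its components are counted by the same iterated interval count as for $Y$ in the proof of Lemma~\ref{reflectionlem}, now with $n$ steps and $i+2$ intervals at the $i$th step, giving $3\cdot 4\cdots (n+2)=\frac{(n+2)!}{2}$, with Lemma~\ref{FVlemma} converting this into the value of the total character; your closing observation that the forbidden values remain distinct because a coincidence would force some product $z_a\cdots z_{b-1}=1$ (an excluded arrangement condition) is precisely the justification the paper leaves implicit. The only blemish is the parenthetical for $z_2$ (``$4$ forbidden values\ldots giving $5$ components''): avoiding $0$, $1$ and $1/z_1$ is $3$ forbidden values and $4$ components, which is what your general rule and the final product actually use, so the slip does not affect the result.
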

 
 \begin{proof}
  The proof is a calculation similar to that in
  the proof of Lemma~\ref{reflectionlem}. We note that the equations
  for $X_{A_n}^{\bar{\textrm{id}}}$ are $z_i = \bar{z}_i$, $i=1, \ldots, n$
  so the computation of $E(X_{A_n}^{\bar{\textrm{id}}})$ is essentially the same
  as that for $E(Y)$ above. The difference is that we have $n$ steps
  and in the $i$th step we have $i+1$ choices. This gives the result.
 \end{proof}
 
 Lemmas~\ref{gthantwolem}, \ref{manytranslem}, \ref{reflectionlem} and \ref{idlem}
 together determine the character of $W_{A_{n}}$ on $\derham{*}{X_{A_n}}$.
 The representation $\mathrm{Ind}_{\langle s \rangle}^{W_{A_n}}(\mathrm{Triv}_{\langle s \rangle})$
 takes value $\frac{(n+1)!}{2}$ on the identity element, $2(n-1)!$ on
 transpositions and is zero elsewhere. Since the character of $H^*(X_{A_n})$ takes the value
 $(n+2)!/2$ on the identity, $n!$ on transpositions and is zero elsewhere
 we see that Theorem~\ref{antotcohthm} holds.
 
 \begin{rem}
 The corresponding calculation for the affine hyperplane case was first computed by Lehrer in \cite{lehrer}
 and later by Felder and Veselov in \cite{felderveselov}. In the hyperplane case, the total
 cohomology turned out to be $2 \, \mathrm{Ind}_{\langle s \rangle}^{W_{A_n}}(\mathrm{Triv_{\langle s \rangle}})$,
 where $s$ is a transposition, i.e. the cohomology is twice the representation induced up from the trivial
 representation of the subgroup generated by a transposition. Thus, the representation 
 $\mathrm{Ind}_{\langle s \rangle}^{W_{A_n}}(\mathrm{Triv}_{\langle s \rangle})$ accounts for most of the
 cohomology also in the hyperplane case.
 \end{rem}
 
 \section{The Poincar\'e polynomial}
 \label{poincsec}
 In this section we shall see that the Poincar\'e polynomial of the complement of the toric arrangement associated
 to $A_n$ satisfies the formula given in 
 Theorem~\ref{anpoincpolprop}. We give several proofs of this result.
 Before giving the first proof we remark that setting $t=1$ in Theorem~\ref{anpoincpolprop}
 gives another proof of Lemma~\ref{idlem}.
 
 \begin{proof}[Proof 1]
  The key observation in the first proof is that $X_{A_n}$ is isomorphic to the moduli space $\mathcal{M}_{0,n+3}$
  of smooth rational curves marked with $n+3$ points. To see this, note that
  \begin{equation*}
   X_{A_n} = \{(x_0,x_1, \ldots, x_n) \in (\mathbb{C}^*)^{n+1}/\mathbb{C}^*| x_i \neq x_j\}.
  \end{equation*}
  We thus have that
  \begin{equation*}
   (x_0,x_1, \ldots, x_n) \mapsto [x_1/x_0,\ldots,x_n/x_0,0,1,\infty] \in (\mathbb{P}^1)^{n+3}
  \end{equation*}
  gives an isomorphism $X_{A_n} \to \mathcal{M}_{0,n+3}$.

  We count the number of points of $\mathcal{M}_{0,n+3}$ over a finite field $\mathbb{F}_q$ with $q$
  elements by choosing $n+3$ distinct points on $\mathbb{P}^1$ and dividing the result by the order
  of $\mathrm{PGL}(2,\mathbb{F}_q)$. We have $|\mathbb{P}^1(\mathbb{F}_q)|=q+1$ and $|\mathrm{PGL}(2,\mathbb{F}_q)| = (q+1)q(q-1)$.
  This gives
  \begin{equation*}
   |\mathcal{M}_{0,n+3}(\mathbb{F}_q)| = \frac{\prod_{i=1}^{n+3}(q+1-(i-1))}{(q+1)q(q-1)} = \prod_{i=1}^{n}(q-(i+1)).
  \end{equation*}
  Call the above polynomial $p(q)$.
 By results of Dimca and Lehrer \cite{dimcalehrer} we obtain the Poincar\'e polynomial
 of $\mathcal{M}_{0,n+3}$ as $(-t)^n \cdot p(-1/t)$.
 \end{proof}

 The Poincar\'e polynomial of $\mathcal{M}_{0,n}$ is well-known
 and the above proof is not new,
 although it seems to be missing in the literature. The advantage of the above
 proof is that it easily extends to make equivariant computations.
 Neither this is new and we refer the interested reader to Getzler \cite{getzler}
 for the results (although his methods are quite different).
 
 In the above proof, we computed something quite different from what we
 originally were interested in (the number of points of $\mathcal{M}_{0,n+3}$ over $\mathbb{F}_q$)
 and arrived at the desired result via a change of variables.
 This will also be the case in the second proof which has a more combinatorial flavour.
 
 \begin{proof}[Proof 2]
    Let $M_{A_n}(x,y)$ denote the arithmetic Tutte polynomial corresponding
  to the toric arrangement associated to $A_n$. By Theorem 5.11 of
  \cite{moci} we have 
  \begin{equation*}
  P(X_{A_n},t)=t^nM_{A_n}\left(\frac{2t+1}{t},0\right).
  \end{equation*}  
  We introduce the new variables $X=(x-1)(y-1)$ and $Y=y$ and define 
  $\psi_{A_n}(X,Y)=(-1)^nM_{A_n}(x,y)$.
  We then have
  \begin{equation*}  
  \psi_{A_n}\left(-\frac{t+1}{t},0 \right)=\left(- \frac{1}{t} \right)^n P(X_{A_n},t).
  \end{equation*}
  
  Let
  \begin{equation*}
   F(x,y) = \sum_{n \geq 0} \frac{x^n y^{\binom{n}{2}}}{n!}
  \end{equation*}
  and define the generating function
  \begin{equation*}
   \Psi_A(X,Y,Z) = 1 + X \cdot \sum_{n \geq 1} \psi_{A_{n-1}}(X,Y) \frac{Z^n}{n!}.
  \end{equation*}
  Then
  \begin{equation}
  \label{psiequation}
   \Psi_A \left( -\frac{t+1}{t},0,Z \right) = 1 + \left( - \frac{t+1}{t} \right) \cdot
   \sum_{n \geq 1} \left( - \frac{1}{t} \right)^{n-1} P(X_{A_{n-1}},t) \frac{Z^n}{n!}.
  \end{equation}
  
  By Theorem 1.14 of \cite{ardilacastillohenley} we have
  $\Psi_A(X,Y,Z)=F(Z,Y)^X$. We thus get
  \begin{align*}
   \Psi_A\left( -\frac{t+1}{t},0,Z \right) & = F(Z,0)^{-\frac{t+1}{t}} = \\
   \, & = (1+Z)^{-\frac{t+1}{t}} = \\
   \, & = \sum_{n \geq 0} \binom{-\frac{t+1}{t}}{n} Z^n = \\
   \, & = \sum_{n \geq 0} \frac{\left(-\frac{t+1}{t}\right)\left(-\frac{t+1}{t}-1\right) \cdots 
   \left(-\frac{t+1}{t}-n+1\right)}{n!}Z^n = \\
   \, & = 1 + \left(-\frac{t+1}{t}\right) \cdot
   \sum_{n \geq 1} \left(-\frac{1}{t}\right)^{n-1} \cdot \prod_{i=1}^{n-1}((i+1)t+1) \frac{Z^n}{n!}.
  \end{align*}
  We now get the result by comparing the above expression with Equation \ref{psiequation}.
 \end{proof}
 
 The advantage of the above proof is that it extends to other root systems of classical type.
 However, it does not seem to work well equivariantly.
 
 To give the third and final proof we need some new terminology.
 Given an arrangement $\mathcal{A}= \{A_i\}_{i \in I}$ in a variety $X$ we define
 the intersection poset of $\mathcal{A}$ as the set
 \begin{equation*}
  \pos{L}{A} = \{ \cap_{j \in J} A_j | J \subseteq I\}
 \end{equation*}
 ordered by reverse inclusion. 
 
 Let $\Phi$ be a root system in $\mathbb{R}^n$ and let $M= \mathbb{Z}\langle \Phi \rangle$.
 We have defined a toric arrangement $\mathcal{A}_{\Phi}= \{A_{\alpha}\}_{\alpha \in \Phi}$ in $X = \mathrm{Hom}(M,\mathbb{C}^*)$ associated to $\Phi$ by setting
 \begin{equation*}
  A_{\alpha} = \{\chi \in X | \chi(\alpha)=1\}
 \end{equation*}
 for all $\alpha \in \Phi$. Similarly we define a hyperplane arrangement $\mathcal{B}_{\Phi} = \{B_{\alpha}\}_{\alpha \in \Phi}$ in $Y = \mathrm{Hom}(M,\mathbb{C})$
 by setting
 \begin{equation*}
  B_{\alpha} = \{\phi \in Y | \phi(\alpha)=0\}.
 \end{equation*}
 The arrangements $\mathcal{A}_{\Phi}$ and $\mathcal{B}_{\Phi}$ and their posets are related
 but to see how we shall change the perspective slightly.
 
 Let $V = M \otimes \mathbb{C}$. Fleischmann and Janiszczak \cite{fleischmannjaniszczak}
 define the poset $\mathscr{P}(\Phi)$ as the poset of linear spans in $V$ of subsets of $\Phi$,
 ordered by inclusion, and show that the posets $\mathscr{L}(\mathcal{B}_{\Phi})$ and $\mathscr{P}(\Phi)$ are isomorphic. 
 We follow them and define $\mathscr{R}(\Phi)$ as the poset
 of submodules of $M$ spanned by elements of $\Phi$, ordered by inclusion.
 In very much the same way we have that the posets $\mathscr{L}(\mathcal{A}_{\Phi})$ and $\mathscr{R}(\Phi)$
 are isomorphic.
 
 There is a surjective order preserving map $\rho: \mathscr{R}(\Phi) \to \mathscr{P}(\Phi)$
 sending a submodule $N \subset M$ to $N \otimes \mathbb{C}$.
 If we represent elements in $\mathscr{R}(\Phi)$ and $\mathscr{P}(\Phi)$ by their echelon basis matrices, 
 $\rho$ sends a matrix $C$ to the matrix obtained from $C$ by dividing each row
 by the greatest common divisor of its entries, i.e. sending a matrix to its saturation.
 We thus see that $\rho$ is an isomorphism if and only if every module in $\mathscr{R}(\Phi)$
 is saturated.
 For $\Phi=A_n$ this is indeed the case. The pivotal observation for proving this
 is the following lemma, which can be proven via a simple induction argument on the number of rows using Gaussian elimination.

 \begin{lem}
 \label{matlemma}
  Let $C$ be a binary matrix of full rank such that the $1$'s in each row of $C$
  are consecutive. Then each pivot element in the row reduced echelon
  matrix (over $\mathbb{Z}$) obtained from $C$ is $1$.
 \end{lem}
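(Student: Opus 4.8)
The plan is to argue by induction on the number of rows $k$ of $C$, running Gaussian elimination over $\mathbb{Z}$ while checking that the hypotheses are preserved at each step. By hypothesis every row of $C$ is the $0/1$ indicator vector of some set of consecutive columns, i.e. of an interval $[a_i,b_i]$; since $C$ has full rank it has no zero row and its rows are pairwise distinct.

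For $k=1$ the single (nonzero) row is an interval indicator, so its leading entry is $1$ and there is nothing more to prove. For the inductive step I would choose the row $r$ whose interval $[a,b]$ is lexicographically smallest --- smallest left endpoint $a$, and among those the smallest right endpoint $b$. The crucial combinatorial observation is this: for any other row $r'$, with interval $[a',b']$, either $a'>a$, and then $r'$ is left untouched and already has its leading entry in a column larger than $a$; or $a'=a$, in which case $b'>b$ (by distinctness and the minimality of $b$), and the difference $r'-r$ is precisely the $0/1$ indicator vector of the nonempty interval $[b+1,b']$, whose leading entry again lies in a column larger than $a$. Hence, replacing every row of the second kind by its difference with $r$ is a sequence of elementary integral row operations that clears column $a$ outside of $r$, keeps the entry of $r$ in column $a$ equal to $1$, and leaves the other $k-1$ rows as $0/1$ indicator vectors of intervals, all supported strictly to the right of column $a$ and therefore still of full rank $k-1$. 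Applying the induction hypothesis to these $k-1$ rows produces an integral row echelon form with all pivots equal to $1$; since these pivots sit in columns $>a$, placing $r$ on top gives an integral row echelon form of $C$ with all pivots $1$, and the final normalization of the entries above the pivots (reduction modulo $1$) changes nothing. By uniqueness of the Hermite normal form this is the row-reduced echelon matrix of $C$, and all its pivots are $1$.

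The only delicate point --- and the step I expect to need the most care --- is verifying that the chosen elimination preserves the ``consecutive ones'' property: this is exactly why one subtracts the lexicographically smallest interval row, since that row annihilates the common initial segment of any row sharing its left endpoint and leaves behind a genuine interval of $1$'s. Everything else is bookkeeping; in particular, only row swaps and additions of integer multiples of rows are used, so the output is a legitimate row echelon form over $\mathbb{Z}$, and the conclusion does not depend on the choices made along the way. For context, this lemma is precisely what is needed to conclude that every submodule in $\mathscr{R}(A_n)$ is saturated, because in the basis of simple roots the positive root $e_i-e_j$ (with $i<j$) is the indicator vector of the interval $[i,j-1]$, so an echelon basis matrix of such a submodule is a binary matrix with consecutive ones in each row.
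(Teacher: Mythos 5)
Your argument is correct and is exactly the proof the paper has in mind: the paper only remarks that the lemma follows ``via a simple induction argument on the number of rows using Gaussian elimination,'' and your write-up supplies precisely that induction, with the key observation that subtracting the row whose interval has the smallest left endpoint (and then smallest right endpoint) leaves every other row an interval indicator supported strictly to the right of the pivot column. The only cosmetic quibble is the phrase that the final normalization above the pivots ``changes nothing'' --- it does change those entries to $0$, but of course leaves the pivots equal to $1$, which is all that is claimed.
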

 
 If one expresses the positive roots of $A_n$ in terms of the simple roots $\beta_i$, 
 each root is a vector of zeros and ones with all ones consecutive.
 It thus follows from Lemma~\ref{matlemma} that the modules in $\mathscr{R}(A_n)$
 are saturated and therefore we have that the map $\rho: \mathscr{R}(A_n) \to \mathscr{P}(A_n)$
 is an isomorphism of posets.
 
 \begin{rem}
  After the appearance of the first preprint of this paper, Bibby \cite{bibby} showed
  the stronger result that the intersection poset of a hyperplane arrangement,
  a toric arrangement and of an abelian arrangement associated to $A_n$ is isomorphic to
  the partition lattice. 
 \end{rem}
 
 The above observation is one key ingredient in our third proof of Theorem~\ref{anpoincpolprop}.
 Another key ingredient is the following theorem of MacMeikan.
 
 \begin{thm}[MacMeikan \cite{macmeikan}]
\label{macmeikanthm}
 Let $\mathcal{A}=\left\{A_i\right\}_{i \in I}$ be a toric arrangement or an arrangement of hyperplanes.
 Then
 \begin{equation*}
  P(\undcal{X}{A},t) = \sum_{Z \in \pos{L}{A}} \mu(Z) (-t)^{\cd(Z)} P(Z,t)(g),
 \end{equation*}
 where $\cd(Z)$ denotes the codimension of $Z$ in $X$ and $\mu$ denotes the M\"obius function of $\pos{L}{A}$.
\end{thm}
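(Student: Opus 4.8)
The plan is to derive MacMeikan's formula by combining three ingredients already present in this paper: the stratification of $X$ by the intersection poset $\pos{L}{A}$, M\"obius inversion in that poset, and the Dimca--Lehrer dictionary \cite{dimcalehrer} relating point counts (equivalently, Hodge--Deligne $E$-polynomials) to Poincar\'e polynomials for varieties whose rational cohomology is of Tate type. For $Z\in\pos{L}{A}$ let $Z^{\circ}:=Z\setminus\bigcup_{W>Z}W$ be the open stratum it determines; assigning to each point of $X$ the smallest flat containing it gives the disjoint decompositions $Z=\bigsqcup_{W\geq Z}W^{\circ}$, in particular $X=\bigsqcup_{W\in\pos{L}{A}}W^{\circ}$, with $\undcal{X}{A}=X^{\circ}$ the stratum attached to the minimum $\widehat 0=X$. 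Each $Z$ is a finite disjoint union of $(n-\cd(Z))$-dimensional subtori (in the toric case) or affine subspaces (in the hyperplane case), and each $Z^{\circ}$ is the complement of a toric, respectively hyperplane, sub-arrangement inside $Z$.

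First I would pass to a model over $\mathbb{Z}$ and record the purely combinatorial inclusion--exclusion identity. Counting $\mathbb{F}_q$-points in $Z=\bigsqcup_{W\geq Z}W^{\circ}$ gives $|Z(\mathbb{F}_q)|=\sum_{W\geq Z}|W^{\circ}(\mathbb{F}_q)|$, and M\"obius inversion in $\pos{L}{A}$ yields, at the bottom element,
\begin{equation*}
 |\undcal{X}{A}(\mathbb{F}_q)|=\sum_{Z\in\pos{L}{A}}\mu(Z)\,|Z(\mathbb{F}_q)|,\qquad\mu(Z):=\mu(\widehat 0,Z).
\end{equation*}
Next I would invoke the structure theory of Brieskorn \cite{brieskorn} and Looijenga \cite{looijenga}: since each $Z$ and each $Z^{\circ}$ is again a union of components of intersections in a toric or hyperplane arrangement, every $\derham{i}{Z}$ is pure of Tate type $(i,i)$, so by \cite{dimcalehrer} the count $p_Z(q):=|Z(\mathbb{F}_q)|$ determines $P(Z,t)=(-t)^{\dim Z}\,p_Z(-1/t)$, equivalently $p_Z(-1/t)=(-t)^{-\dim Z}\,P(Z,t)$; the same applies to $\undcal{X}{A}$, which has dimension $n$.

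Finally I would substitute. Applying the dictionary first to $\undcal{X}{A}$ and then to each $Z$ in the inclusion--exclusion identity,
\begin{equation*}
 P(\undcal{X}{A},t)=(-t)^{n}\sum_{Z}\mu(Z)\,p_Z(-1/t)=(-t)^{n}\sum_{Z}\mu(Z)\,(-t)^{-\dim Z}\,P(Z,t),
\end{equation*}
and since $n-\dim Z=\cd(Z)$ the powers of $-t$ collapse to give precisely $P(\undcal{X}{A},t)=\sum_{Z}\mu(Z)\,(-t)^{\cd(Z)}P(Z,t)$. Running the same argument in the Grothendieck group of $\Gamma$-representations, with point counts replaced by equivariant counts over the $g$-stable flats, yields the equivariant refinement carrying $P(Z,t)(g)$ on the right.

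The only genuinely non-formal ingredient is the structural input: one must know that every flat $Z\in\pos{L}{A}$ and every stratum $Z^{\circ}$ still has rational cohomology of Tate type $(i,i)$ in degree $i$, so that the Dimca--Lehrer comparison applies uniformly along the whole stratification --- this is exactly where the hypothesis ``toric arrangement or arrangement of hyperplanes'' is used, through Brieskorn and Looijenga. A secondary obstacle is the passage from the elementary $\mathbb{F}_q$-bookkeeping to a statement about complex cohomology, which requires either a Lang--Weil/Katz type comparison together with purity, or the mixed Hodge theoretic reformulation in which additivity of the $E$-polynomial along a stratification replaces the naive point count. Granting these, the remaining argument is the formal combination above, involving nothing beyond M\"obius inversion and tracking exponents of $-t$.
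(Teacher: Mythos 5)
You are not missing a proof from the paper here: Theorem~\ref{macmeikanthm} is quoted from MacMeikan \cite{macmeikan} and the paper gives no argument for it (the stray ``$(g)$'' in the displayed formula is a typo left over from the equivariant version, which you correctly treat as an optional refinement). Your proposal is essentially sound, but it is a genuinely different route from MacMeikan's. His theorem is stated and proved for arrangements of ``minimally pure'' subvarieties (which is why the paper remarks the result is ``quite a bit stronger'') and the proof works directly with mixed Hodge structures, using the purity of the strata to force degeneration of the relevant spectral sequences and then reading off the M\"obius-function formula; no arithmetic input is needed. You instead stratify $X=\bigsqcup_W W^{\circ}$ by minimal flats, M\"obius-invert the resulting additive identity, and convert to Poincar\'e polynomials via the Dimca--Lehrer dictionary $P(Y,t)=(-t)^{\dim Y}p_Y(-1/t)$, which is valid for smooth $Y$ with $\derham{i}{Y}$ of Tate type $(i,i)$ (this applies to $\undcal{X}{A}$ by Brieskorn--Looijenga and to each flat $Z$, a disjoint union of equidimensional subtori or affine subspaces; purity of the open strata $Z^{\circ}$ is not actually needed if you phrase additivity with compactly supported $E$-polynomials). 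What your approach buys is elementarity and an immediate connection to point counts; what it costs is exactly the technical layer you flag: for toric arrangements the flats can have torsion components, so naive $\mathbb{F}_q$-counts are only polynomial for $q$ in suitable congruence classes (or one must pass to the Hodge--Deligne $E$-polynomial, which is the cleaner fix), and a comparison between $\ell$-adic and Betti cohomology for almost all primes must be invoked. With those standard points made precise, your argument proves the stated identity, though only for the toric/hyperplane cases used in this paper rather than in MacMeikan's full generality.
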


MacMeikan's result is in fact quite a bit stronger but this version is enough for our purposes.
Note that if $\mathcal{A}$ is a toric arrangement, then each $Z \in \pos{L}{A}$ is a disjoint
union of tori and $P(Z,t)=c_Z(1+t)^{\mathrm{dim}(Z)}$ where $c_Z$ is the number of components of
$Z$. Note also that if $\mathcal{A}$ is a hyperplane arrangement, then then each $Z \in \pos{L}{A}$ is an
affine space and therefore $P(Z,t)=1$ and Theorem~\ref{macmeikanthm} thus reduces to the Orlik-Solomon formula in this case.

 \begin{proof}[Proof 3]
 Let $\mu_{\mathscr{P}}$ denote the M\"obius function of $\mathscr{P}(\Phi)$.
 Theorem~\ref{macmeikanthm} and Equation~(\ref{arnoldformula}) tell us that
 \begin{equation*}
  \sum_{V \in \mathscr{P}(\Phi)} \mu_{\mathscr{P}}(V) \cdot (-t)^{\mathrm{dim}(V)} = \prod_{i=1}^n (1+i \cdot t).
 \end{equation*}
 By equating the coefficients of $t^r$ we get
 \begin{align*}
  \sum_{\stackrel{V \in \mathscr{P}(\Phi)}{\mathrm{dim}(V)=r}} \mu_{\mathscr{P}}(V)(-1)^{r} & = 
  \sum_{\stackrel{I \subseteq \{1, \ldots, n\}}{|I|=r}} \prod_{i \in I} i = \\
  & = e_r(1, \ldots, n),
 \end{align*}
 where $e_r$ denotes the $r$th elementary symmetric polynomial.
 We saw above that the map $\rho: \mathscr{R}(\Phi) \to \mathscr{P}(\Phi)$ is an isomorphism
 of posets. Hence, if $\mu_{\mathscr{R}}$ denotes the M\"obius function of $\mathscr{R}(\Phi)$
 we have that 
 \begin{equation*}
 \mu_{\mathscr{R}}(N)=\mu_{\mathscr{P}}(\rho(N)).
 \end{equation*}
 It thus follows that
 \begin{equation}
 \label{mobsum}
   \sum_{\stackrel{N \in \mathscr{R}(\Phi)}{\mathrm{rk}(N)=r}} \mu_{\mathscr{R}}(N)(-1)^{r} =
   e_r(1, \ldots, n).
 \end{equation}

 If we apply Theorem~\ref{macmeikanthm} to $X_{\Phi}$, we obtain
 \begin{align*}
  P(X_{\Phi},t) & = \sum_{N \in \mathscr{R}(\Phi)} \mu_{\mathscr{R}}(N)(-t)^{\mathrm{rk}(N)} \cdot (1+t)^{n-\mathrm{rk}(N)} \\
  & = \sum_{r=0}^n t^{r} \cdot (1+t)^{n-r} \sum_{\mathrm{rk}(N)=r}  \mu_{\mathscr{R}}(N) \cdot (-1)^{r}.
 \end{align*}
  If we use Equation~(\ref{mobsum}) we now see that the coefficient of $t^k$ in
 $P(X_{\Phi},t)$ is
\begin{equation*}
 \sum_{j=0}^k \binom{n-j}{k-j} \cdot e_j(1, \ldots, n).
\end{equation*}
 The coefficient of $t^k$ in $\prod_{i=1}^n (1+(i+1) \cdot t)$ is
 \begin{align*}
  \sum_{\stackrel{I \subseteq \{1, \ldots, n\}}{|I|=k}} (i_1+1) \cdots (i_k+1) 
  & =  \sum_{\stackrel{I \subseteq \{1, \ldots, n\}}{|I|=k}} \sum_{j=0}^k e_j(i_1, \ldots, i_k) = \\
  & = \sum_{j=0}^k \sum_{\stackrel{I \subseteq \{1, \ldots, n\}}{|I|=k}} e_j(i_1, \ldots, i_k) = \\
  & = \sum_{j=0}^k \binom{n-j}{k-j} \cdot e_j(1, \ldots, n).
 \end{align*}
 This proves the claim.
 \end{proof}


\subsection*{Acknowledgments}
Some of this paper is based on parts of my thesis \cite{bergvallthesis}.
I would like to thank Carel Faber and Jonas Bergstr\"om for helpful discussions and comments, 
Federico Ardila for pointing out a preprint 
of the interesting paper \cite{ardilacastillohenley}, Emanuele Delucchi for interesting discussions
and an anonymous referee for many helpful comments and suggestions.

\bibliographystyle{plain}

\renewcommand{\bibname}{References} 

\bibliography{references} 

\end{document}